\documentclass[10pt,a4paper]{article}

\usepackage{amsmath}
\usepackage{amsfonts}
\usepackage{amssymb}
\usepackage{graphicx}
\usepackage{amsthm}
\usepackage[all]{xy}
\usepackage{lineno,hyperref}
\usepackage{comment}
\usepackage[subnum]{cases}
\usepackage{multicol}
\usepackage{multirow}

\bibliographystyle{elsarticle-num}

\newcounter{oftheorem}[subsection]
\setcounter{section}{0}
\newenvironment{mytheorem}[1]%
{\begin{trivlist}
     
     \refstepcounter{oftheorem}
     \item[\hspace{\labelsep}\bf\thesection.\arabic{oftheorem} #1]}%
{\end{trivlist}}
\newenvironment{definition}{\begin{mytheorem}{Definition}\it}{\end{mytheorem}}
\newenvironment{example}{\begin{mytheorem}{Example}\it}{\end{mytheorem}}

\newenvironment{theorem}{\begin{mytheorem}{Theorem}\it}{\end{mytheorem}}

\newenvironment{remark}{\begin{mytheorem}{Remark}}{\end{mytheorem}}
\newenvironment{lemma}{\begin{mytheorem}{Lemma}}{\end{mytheorem}}

\author{Stavros Anastassiou\\
Department of Mathematics,\\ University of West Macedonia\\
GR-52100 Kastoria, Greece\\sanastassiou@gmail.com}

\title{Local models for smooth vector fields of the line}
\begin{document}
\maketitle
\begin{abstract}
We present the local classification of singularities of smooth vector fields on the line, with respect to the equivalence relation of $C^1$--conjugacy. Along the way, we recall the analogous classification, up to $C^0$ and $C^{\infty}$ conjugacy. We also give the transversal unfoldings of the corresponding normal forms and treat the case where the changes of coordinates are tangent to the identity. Thus, a fairly complete description of the $1$--d case is achieved.   
\end{abstract}
\textbf{Keywords:} local classification, vector fields of the line
\\
\textbf{MSC2010:} 37G05, 37G10, 58K45
\section{Introduction}
The problem of classifying vector fields, according to the qualitative structure of their orbits, has a long history. In this problem, one has to prove the existence of a $C^r,\ r\geq 0$, invertible mapping, transforming orbits of one vector field to orbits of another, respecting the time--parametrization of those orbits (or just their orientation).

In a neighborhood of a hyperbolic singularity, the classification problem is practically solved. Up to $C^0$ invertible mappings, the orbit structure of a vector field is, locally, identical with the phase structure of its linear part, according to the Grobman--Hartman theorem. Up to $C^{\infty}$--diffeomorphisms, Sternberg's theorem assures, once again, the linearization of the vector field, as long as the eigenvalues of its linear part satisfy a number of ``non--resonant conditions". Classical references on singularities of vector fields include \cite{Sternberg,Takens,Dumortier}, where more details can be found.       

To find ``simple" models of vector fields at non--hyperbolic singularities, up to $C^{\infty}$ invertible maps, one usually resorts to normal form theory. The terms that can be ``removed", using smooth transformations, are identified and the remaining terms consist what is known as the ``normal form" of the vector field. Then, one has to study perturbations of this field, to give a complete picture of the bifurcations it undergoes. The reader may consult e.g. \cite{Murdock} and \cite{Sanders-Verhulst-Murdock}, for an account of normal forms and unfoldings of vector fields, while \cite{Ren-Yang} contains results regarding the $C^1$ case. 

Perhaps not surprisingly, the $C^1$ case turns out to be somewhat more complicated, since the homological equations involving $C^1$ vector fields cannot be solved using methods available in the $C^{\infty}$ case. Actually, as shown in \cite{Helene1}, even in the $1$--dimensional case, $C^1$ vector fields having a hyperbolic singularity exist, which cannot be, locally, linearized.

In this article, we deal with the local study at the origin of $C^{\infty}$ vector fields of the line, the vector space of which will be denoted by $\mathcal{X}(\mathbb{R})$. We identify the vector field $f(x)\frac{\partial}{\partial x}$ with its coefficient $f$. So, let $\mathcal{E}$ stand for the ring of infinitely differentiable univariate function--germs. If $\phi$ is the germ at the origin of a $C^k$ reversible transformation, we wish to provide local models for vector fields under the usual conjugacy relation: vector fields $f,g$ are said to be $C^k$--conjugate if $g(x)=\frac{1}{\phi '(x)}f(\phi (x))$.   

In section 2, we give local models for singularities of vector fields, under this conjugacy relation, while in section 3 we consider the problem of their unfoldings. We completely solve the $C^1$ case, while recalling, along the way, the analogous statements for $C^0$ and $C^\infty$ invertible mappings. When the transformation $\phi$ is differentiable, we consider separately the case where it is tangent to the identity, that is, $\phi'(0)=1$.

Thus, in this way, a fairly complete picture of the local behaviour of $1$--d vector fields is presented.
\section{Local classification of members of $\mathcal{X}(\mathbb{R})$}
\subsection{The $C^0$ case}
Let $f(x)\frac{\partial}{\partial x},g(x)\frac{\partial}{\partial x}\in \mathcal{X}(\mathbb{R})$, with corresponding flows $f^t,g^t$. In the $C^0$ case, the change of coordinates is a local  hommeomorphism $\phi$ of $\mathbb{R}$. Since it is not, in general, differentiable, it acts not on the vector fields themselves, but on their flows. That is, the hommeomorphism is expected to map flow $f^t$ to $g^t$, i.e. $\forall t \in \mathbb{R}$, relation $\phi\circ f^t=g^t\circ \phi$ should hold.

In case the origin is not a fixed point for the flows $f^t,g^t$, the identity transformation can be used to show that these two flows are conjugate. 

We therefore turn our attention to the case where the origin is a fixed point for both flows. Denote by $f^t_+,g^t_+$ (similarly, $f^t_-,g^t_-$) the restrictions of the two flows on the positive (negative) semi--axis. Let us suppose that:
\begin{enumerate}
\item[(1)] {Either $\lim \limits_{t\rightarrow +\infty}f_+^t=\lim \limits_{t\rightarrow +\infty}g_+^t=0$, or $\lim \limits_{t\rightarrow -\infty}f_+^t=\lim \limits_{t\rightarrow -\infty}g_+^t=0$.}
\item[(2)] {Either $\lim \limits_{t\rightarrow +\infty}f_-^t=\lim \limits_{t\rightarrow +\infty}g_-^t=0$, or $\lim \limits_{t\rightarrow -\infty}f_-^t=\lim \limits_{t\rightarrow -\infty}g_-^t=0$.}
\end{enumerate}  
We have the following:
\begin{theorem}
Let us suppose that $f^t,g^t$ are flows on the line, having the origin as a fixed point. Assume that conditions (1) and (2) above hold. Then, the germ of a hommeomorphism $\phi$ exists, such that $\phi\circ f^t=g^t\circ \phi$.
\end{theorem}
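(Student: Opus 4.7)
The plan is to build $\phi$ separately on the two semi-axes and then set $\phi(0)=0$. Both flows fix the origin, so each semi-axis is invariant. Condition (1) (respectively (2)) forces the flows $f^t, g^t$ to have no other fixed points on the positive (respectively negative) semi-axis in a small enough neighborhood of $0$, since a flow converging to $0$ for every initial condition in the neighborhood cannot have other equilibria there. Hence each punctured semi-axis near $0$ is a single $\mathbb{R}$-orbit of each flow, and the construction reduces to transporting along the flow from a chosen base point.

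I treat the positive side in the subcase $f_+^t, g_+^t\to 0$ as $t\to+\infty$. Pick small $a,b>0$. The maps $t\mapsto f^t(a)$ and $t\mapsto g^t(b)$ are strictly decreasing homeomorphisms $[0,+\infty)\to(0,a]$ and $[0,+\infty)\to(0,b]$ respectively. Define
\[
\phi(f^t(a)) := g^t(b), \qquad t\geq 0,
\]
which gives a homeomorphism $\phi:(0,a]\to(0,b]$. The intertwining relation is immediate from the definition, since
\[
\phi(f^s(f^t(a))) = \phi(f^{t+s}(a)) = g^{t+s}(b) = g^s(\phi(f^t(a))).
\]

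The crux -- and the only place where hypothesis (1) is really used -- is continuity of $\phi$ at $0$. If $x_n\to 0^+$, then the unique $t_n\geq 0$ with $f^{t_n}(a) = x_n$ must tend to $+\infty$ (otherwise $(t_n)$ would have a bounded subsequence along which $x_n$ would subconverge to a positive value of $f^t(a)$, contradicting $x_n\to 0$); hence $\phi(x_n) = g^{t_n}(b)\to 0$ by (1). The same reasoning applied to $\phi^{-1}$ gives continuity of the inverse at $0$, so $\phi$ extends to a homeomorphism $[0,a]\to[0,b]$ fixing $0$.

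The second subcase of (1), where both limits are taken as $t\to-\infty$, is obtained by replacing $t$ by $-t$ throughout, and the negative semi-axis is handled analogously using (2) with base points chosen on the left of $0$. Gluing the three pieces at $\phi(0)=0$ yields the desired germ of a homeomorphism intertwining the two flows. The whole argument rests on the transitivity of the $\mathbb{R}$-action on each punctured semi-axis, which is automatic in dimension one, combined with the attractor/repellor compatibility encoded in (1)--(2); any real difficulty is confined to the continuity check at the fixed point.
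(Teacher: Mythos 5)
Your proof is correct and is essentially the paper's argument: both constructions send the time-$t$ point of $f$ on a chosen orbit to the time-$t$ point of $g$ (the paper encodes this via the time functions $\tau_f(x)=\int_{\epsilon/2}^x\frac{dy}{f(y)}$ and sets $\phi=\tau_g^{-1}\circ\tau_f$, which is exactly your $\phi(f^t(a))=g^t(b)$ with $a=b=\epsilon/2$), and both reduce the real work to checking continuity at the origin using conditions (1)--(2). The only, minor, difference is that you work directly with the flow maps rather than with the integral formula, which has the small advantage of applying verbatim to continuous flows not generated by a vector field, a generality the paper itself claims in the remark following the theorem.
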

\begin{proof}
Set $h(0)=0$.

Let us restrict our attention on the positive semi-axis and suppose that $\lim \limits_{t\rightarrow +\infty}f_+^t=\lim \limits_{t\rightarrow +\infty}g_+^t=0$ (the other case can be treated in exactly the same way).

We can suppose that both $f^t_+,g^t_+$ are defined on the interval $(0,\epsilon),\ \epsilon >0$, which interval contains no zeros for the fields $f,\ g$. Define functions $\tau_f,\tau_g:(0,\epsilon)\rightarrow \mathbb{R}$ as follows:
\[
\tau_f(x)=\int_{\epsilon/2}^x\frac{1}{f(y)}dy,\ \tau_g(x)=\int_{\epsilon/2}^x\frac{1}{g(y)}dy.
\]
Observe that they are well--defined, differentiable and invertible, since their derivatives do not vanish.

We also observe that:
\[
\tau_f\circ f_+^t(x)=\int_{\epsilon/2}^{f_+^t(x)}\frac{1}{f(y)}dy=\int_{\epsilon/2}^{x}\frac{1}{f(y)}dy+\int_{x}^{f_+^t(x)}\frac{1}{f(y)}dy=\tau_f(x)+t.
\]
To verify the last equality, just differentiate relation $t=\int \limits _{x}^{f_+^t(x)}\frac{1}{f(y)}dy$ with respect to $t$.

Similarly, $\tau_g\circ g_+^t=\tau_g+t$. If $T^t(x)=x+t$ is the usual standard translation, we have:
\[
T^t=\tau_f\circ f^t_+\circ \tau_f^{-1}=\tau_g\circ g^t_+\circ \tau_g^{-1}\Rightarrow \tau_g^{-1}\circ \tau_f \circ f^t \circ \tau_f^{-1} \circ \tau_g=g_+^t.
\]
The sought mapping is, of course, $\phi=\tau_g^{-1}\circ \tau_f$. It is obviously continuous on $(0,\epsilon)$, while $\lim \limits _{x\rightarrow 0}\phi(x)=0$, due to condition (1) above. 

We then extend $\phi$ to the left half--line in an analogous manner, under the assumption that condition (2) is satisfied and the theorem is proved.
\end{proof}

\begin{remark}
Note that, in the proof above, flows $f^t,g^t$ need to only be continuous. Thus, the result holds not only in case the vector fields are smooth, but even in the $C^1$ case. In all cases, the differentiability of the conjugating hommeomorphism $\phi$ is not guaranteed. 
\end{remark}

\begin{example}
The flows of the, smooth, vector fields $x\frac{\partial}{\partial x},2x\frac{\partial}{\partial x}$ are conjugated, even globally, via $\phi:\mathbb{R}\rightarrow \mathbb{R}$, defined as:
\[
\phi(x)=
\begin{cases}
-x^2,\ &\ x<0\\
x^2,\ &\ x\geq 0
\end{cases}.
\]
Although $\phi$ is a hommeomorphism, its inverse is not differentiable at the origin.
\end{example}

Up to hommeomorphisms, there are thus four models for local vector fields, given in the following table.
\vspace*{0.3cm}
\begin{center}
Table 1:\\
Local models under topological conjugacy
\end{center}    
{\small
\begin{center}
\begin{tabular}{|c|c|}
\hline
Singularity & Vector field\\
\hline 
\hline
regular point & $\frac{\partial}{\partial x}$ \\ 
attracting singularity & $-x\frac{\partial}{\partial x}$ \\
repelling singularity & $x\frac{\partial}{\partial x}$ \\
degenerate singularity & $x^2\frac{\partial}{\partial x}$ \\
\hline
\end{tabular}
\end{center}}
\vspace*{0.2cm}
\begin{remark}
Vector field $-x^2\frac{\partial}{\partial x}$ is $C^0$ conjugate to $x^2\frac{\partial}{\partial x}$, via hommemorphism $\phi(x)=-x$.
\end{remark}
We have thus completed the topological classification of $1$--d vector fields.  
\subsection{The $C^1$ case}
We now turn our attention to the  local classification of members of $\mathcal{X}(\mathbb{R})$, up to the equivalence relation of  $C^1$--conjugacy, i.e., the conjugacy $\phi$ in relation $g(x)=\frac{1}{\phi '(x)}f(\phi (x))$ is supposed to be a diffeomorphism of class $C^1$. We prove in detail the case where the changes of variables are tangent to the identity. The corresponding results in the general case are proved in a similar manner. To achieve our classification, we use methods from singularity theory, see for example \cite{Brocker}.

Since we identify the vector fields with their coefficients, we work in $\mathcal{E}$ and denote by $m$ the ideal of functions vanishing at the origin and with $m^2$ the ideal of functions vanishing, along with their first derivatives, at the origin. 

Consider a curve of function germs $g_s,\ s\in \mathbb{R}$, passing through $f$ for $s=0$. Supposing that all points of $g_s$ are $C^1$--conjugate to $f$, there exists a curve of local $C^1$--diffeomorphisms $\psi_s:(\mathbb{R},0)\rightarrow (\mathbb{R},0)$, satisfying $\psi_0(x)=x,\ \psi_s(0)=0,\ \psi'_s(0)=1$, such that:
\[g_s(x)=\frac{1}{\psi_s '(x)}f(\psi_s(x)).\]

Taking derivatives with respect to $s$, in that last equality, and evaluating at $s=0$, we get:
$$\frac{\partial}{\partial s}g_s(x)|_{s=0}=-X'(x)f(x)+X(x)f'(x),$$
where the vector field $X$ is defined through relation $\frac{\partial}{\partial s}\psi_s(x)=X(\psi_s(x))$. Notice that, since $\psi'(0)=1$, equations $X(0)=X'(0)=0$ hold.
\begin{lemma}
Let $f\in \mathcal{E}$. The ideal generated by $-X'f+Xf',\ X(0)=X'(0)=0$, equals $f\cdot m+f'\cdot m^2$.
\end{lemma}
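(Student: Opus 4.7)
The easy inclusion is immediate. If $X\in\mathcal{E}$ satisfies $X(0)=X'(0)=0$, then $X\in m^2$ and $X'\in m$, so $X'f\in m\cdot f\subset f\cdot m$ and $Xf'\in m^2\cdot f'\subset f'\cdot m^2$. Hence every generator $-X'f+Xf'$ lies in the ideal $f\cdot m+f'\cdot m^2$, and so does the ideal they generate.

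For the reverse inclusion, let $I$ denote the ideal generated by the expressions $-X'f+Xf'$ with $X\in m^2$. Since $m$ and $m^2$ are principal, generated by $x$ and $x^2$ respectively, it suffices to show that $xf\in I$ and $x^2f'\in I$. I would extract these from the two simplest choices of $X$:
\[
A:=-2xf+x^2f'\;\;(X=x^2),\qquad B:=-3x^2f+x^3f'\;\;(X=x^3).
\]
Computing $B-xA=-x^2f$ immediately gives $x^2f\in I$ and hence $f\cdot m^2\subset I$. For general $X=x^2h$ with $h\in\mathcal{E}$ the tangent vector takes the form
\[
-X'f+Xf'=h\cdot A-x^2h'\cdot f,
\]
and since the second term lies in $f\cdot m^2\subset I$, every $\mathcal{E}$-multiple of $A$ lies in $I$ as well. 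Analogous computations with $X=x^{k}h$ for $k\ge 3$ provide further identities linking the tangent vectors to elements of the form $x^{k-1}f$ and $x^kf'$, feeding back into $I$.

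The principal obstacle is the separation of the two ideal pieces: the only first-order relation produced directly is the linear combination $A=-2xf+x^2f'$, and one must leverage the full family of $X\in m^2$ together with $\mathcal{E}$-multiplication to isolate $xf$ and $x^2f'$ individually. I expect to handle this by a Nakayama-type argument on the finitely generated $\mathcal{E}$-module $N:=(f\cdot m+f'\cdot m^2)/I$: the closure properties derived above should place $N$ inside $m\cdot N$, so that $N=0$. Organizing this argument order-by-order in the $m$-adic filtration, and verifying that the identities obtained from the various $X=x^kh$ really do force $N\subset m\cdot N$ at each level, is the principal technical difficulty I anticipate.
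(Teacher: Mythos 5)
Your forward inclusion and your preliminary identities are all correct: $B-xA=-x^2f$ does give $f\cdot m^2\subseteq I$, the identity $-X'f+Xf'=hA-x^2h'f$ for $X=x^2h$ does give $\mathcal{E}\cdot A\subseteq I$, and likewise $x^3f'=B+3x^2f\in I$. But the step you yourself flag as the ``principal obstacle'' --- isolating $xf$ and $x^2f'$ from the single combination $A=-2xf+x^2f'$ --- is precisely where the proof is missing, and the Nakayama strategy you propose cannot close it. Your own identities already show $m\cdot\big(f\cdot m+f'\cdot m^2\big)=f\cdot m^2+f'\cdot m^3\subseteq I$, i.e.\ $m\cdot N=0$ for $N=(f\cdot m+f'\cdot m^2)/I$; hence the inclusion $N\subseteq m\cdot N$ you hope to verify is \emph{literally} the assertion $N=0$ that you are trying to prove, so Nakayama gives no leverage at all. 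Worse, the obstruction is genuine when $f$ vanishes to order at least $2$: for $f=x^2$ every $X\in m^2$ is $x^2h$ and the corresponding tangent vector is $-x^4h'$, so $I=\langle x^4\rangle$, while $A\equiv0$ and $f\cdot m+f'\cdot m^2=\langle x^3\rangle$. No ideal manipulation will manufacture $xf=x^3$ from generators lying in $\langle x^4\rangle$. Your scheme can be completed when $f(0)\neq0$ (there $x^2f'\in m^2=f^{-1}\cdot x^2f\cdot\mathcal{E}\subseteq I$, so $A$ yields $xf\in I$) and when $f'(0)\neq0$ (there $A$ is $x^2$ times a unit and generates the target $m^2$), but it fails in exactly the degenerate case the paper needs for its classification.

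The paper's own argument is entirely different: given $h=fg+f'k$ with $g\in m$, $k\in m^2$, it solves the linear ODE $-X'f+Xf'=fg+f'k$ outright by the integrating-factor formula $X=k-f\int_0^x\frac{g(t)+k'(t)}{f(t)}\,dt$ and then checks $X(0)=X'(0)=0$, thereby realizing each element of $f\cdot m+f'\cdot m^2$ as an actual tangent vector rather than as an ideal combination of them. You should note, however, that this formula also requires the integral to converge, which again fails when $f$ vanishes to order $\ge 2$ at the origin (for $f=x^2$, $h=x^3$ the integrand is of order $1/t$); the $f=x^2$ computation above indicates that the difficulty is intrinsic to the statement of the lemma in the degenerate case, not merely to your method.
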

\begin{proof}
The inclusion $\langle -X'f+Xf'\rangle \subseteq f\cdot m+f'\cdot m^2$ is obvious, so we shall prove the opposite inclusion.

Let $h\in f\cdot m+f'\cdot m^2$. There exist $g\in m,\ k\in m^2$ such that $h=fg+f'k$. We wish to find a $X\in m^2$ such that $fg+f'k=-X'f+f'X$.

A solution of last equation is:
\[
X(x)=
\begin{cases}
k(x)-f(x)\int_0^x\frac{g(t)+k'(t)}{f(t)}dt,&\ x\neq 0\\
0,&\ x=0
\end{cases}.
\]
Observe that $X(0)=0$ and:
$$X'(0)=\lim_{v\rightarrow 0}\frac{X(v)-X(0)}{v}=\lim_{v\rightarrow 0}\frac{k(v)-f(v)\int _0^v\frac{g(t)+k'(t)}{f(t)}dt}{v}.$$
Since $k\in m^2$, it is $\lim_{v\rightarrow 0}\frac{k(v)}{v}=0$, while:
$$|\lim_{v\rightarrow 0}\frac{f(v)\int_0^v\frac{g(t)+k'(t)}{f(t)}dt}{v}|=\lim_{v\rightarrow 0}\frac{1}{|v|}\cdot|f(v)|\cdot|v|\cdot\frac{1}{|f(v)|}\cdot|g(v)+k'(v)|=0.$$
Thus $X'(0)=0$, ensuring that $X\in m^2$.
\end{proof}
In light of the above, we introduce the following:
\begin{definition}
Let $f$ be the germ of a function at the origin. The tangent space of $f$, with respect to $C^1$ and tangent to identity conjugacy, is defined to be $Tf=f\cdot m+f'\cdot m^2.$
\end{definition}
Now, a germ $f$ is called $k$--determined, with respect to $C^1$--conjugation, if every other germ, with the same $k$--jet, is conjugate to $f$ through a $C^1$--diffeomorphism which is tangent to the identity. If such finite $k$ does not exist, we say that $f$ is not finitely determined.
\begin{theorem}
Function--germ $f$ is $k$--determined if, and only if, $m^{k+1}\subseteq Tf$.
\end{theorem}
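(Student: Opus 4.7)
The statement is a Mather-type infinitesimal criterion for finite determinacy, whose two implications I would attack by different routes: the \emph{if} direction by the path (or homotopy) method of Moser--Mather, the \emph{only if} direction by differentiating a one-parameter deformation of $f$.

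\emph{Sufficiency} ($m^{k+1}\subseteq Tf\Rightarrow f$ is $k$-determined). Given $g\in\mathcal{E}$ with $j^{k}g=j^{k}f$, write $h=g-f\in m^{k+1}$ and interpolate by $f_{s}=f+sh$, $s\in[0,1]$. The goal is to construct a smooth curve of $C^{1}$-diffeomorphisms $\psi_{s}$, tangent to the identity, with $\psi_{0}=\mathrm{id}$ and $f_{s}=(f\circ\psi_{s})/\psi_{s}'$. Writing $\dot\psi_{s}=X_{s}\circ\psi_{s}$ and reparametrising $Y_{s}=(X_{s}\circ\psi_{s})/\psi_{s}'$, a direct calculation transforms the conjugacy condition, after differentiating in $s$, into the homological equation
\[
\dot f_{s}=-Y_{s}'\,f_{s}+Y_{s}\,f_{s}',\qquad Y_{s}\in m^{2}.
\]
If $m^{k+1}\subseteq Tf_{s}$ for every $s\in[0,1]$, Lemma~2.1 applied to $f_{s}$ solves for $Y_{s}$ (since $\dot f_{s}=h\in m^{k+1}$); integrating the resulting non-autonomous flow yields the desired $\psi_{1}$ conjugating $g$ to $f$.

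\emph{Necessity} ($f$ is $k$-determined $\Rightarrow m^{k+1}\subseteq Tf$). Given $h\in m^{k+1}$, consider the ray $g_{s}=f+sh$. Each $g_{s}$ has the same $k$-jet as $f$, so by hypothesis there is a tangent-to-identity $C^{1}$-diffeomorphism $\psi_{s}$ with $g_{s}=(f\circ\psi_{s})/\psi_{s}'$. After verifying that $\psi_{s}$ may be chosen $C^{1}$ in the parameter $s$ (an implicit function argument in a suitable Banach space of $C^{1}$-germs tangent to the identity), the calculation recorded just before Lemma~2.1 yields
\[
h=\partial_{s}g_{s}|_{s=0}=-X'f+Xf',\qquad X\in m^{2},
\]
so $h\in Tf$; since $h\in m^{k+1}$ was arbitrary, $m^{k+1}\subseteq Tf$.

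The two technical obstacles are: \emph{(a)} in sufficiency, proving that $m^{k+1}\subseteq Tf$ persists as $m^{k+1}\subseteq Tf_{s}$ along the homotopy, for which one observes that $f_{s}-f\in m^{k+1}$ and $f_{s}'-f'\in m^{k}$ and applies a Nakayama-type argument in the local ring $\mathcal{E}$; \emph{(b)} in necessity, securing the $C^{1}$-dependence of $\psi_{s}$ on $s$, which is delicate here because the available regularity is only $C^{1}$---the explicit formula for $X$ exhibited in the proof of Lemma~2.1 is precisely what supplies the required estimates for the implicit function step.
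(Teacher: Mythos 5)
Your proposal is correct and follows essentially the same route as the paper: the homotopy (path) method with the homological equation and a Nakayama-type argument for sufficiency, and differentiation of the family $f+sh$ at $s=0$ for necessity. You are in fact slightly more careful than the paper in flagging the need to justify $C^1$-dependence of $\psi_s$ on $s$ in the necessity direction, a point the paper passes over silently.
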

\begin{proof}
Let $m^{k+1}\subseteq Tf$. If $s\in [0,1]$ and $h\in m^{k+1}$, we consider the curve $f_s(x)=f(x)+sh(x)$. We seek a family of tangent to the identity diffeomorphisms, $\psi_s:(\mathbb{R},0)\rightarrow (\mathbb{R},0)$, such that, $\forall s\in [0,1],\ \frac{1}{\psi '(x)}f_s(\psi(x))=f(x).$

Taking derivatives with respect to $s$, we get:
$$-\frac{1}{(\psi_s'(x))^2}X'(\psi_s(x))\cdot f_s(\psi_s(x))+\frac{1}{\psi_s'(x)}\cdot f_s'(\psi_s(x))\cdot X(\psi_s(x))=$$
$$=-\frac{1}{\psi_s'(x)}h(\psi_s(x)),$$
where, as above, $X\in m^2$ is defined through relation $\frac{\partial}{\partial s}\psi_s(x)=X(\psi_s(x))$.

Since $\psi_s'(0)=1$, equation above becomes:
$$-\frac{1}{\psi_s'(x)}X'(y)f_s(y)+f_s'(y)X(y)=-h(y),$$
where we have set, for simplicity, $y=\psi_s(x)$. This equation is equivalent to equation:
$$-\tilde{X}'(y)f_s(y)+f_s'(y)\tilde{X}(y)=-h(y),$$
where:
$$\tilde{X}(y)=X(y)+f(y)\int_0^y\frac{X'(t)}{f(x)}\bigg(\frac{1}{\psi'(t)}-1\bigg)dt.$$
We have thus to show that, $\forall h\in m^{k+1}$ and $\forall s\in[0,1]$, equation
$$-\tilde{X}'(y)f_s(y)+f_s'(y)\tilde{X}(y)=-h(y)$$
can be solved with respect to $\tilde{X}\in m^2$.

Notice that, for $s=0$, the equation above does have a solution with respect to $\tilde{X}\in m^2$, since $h\in m^{k+1}\subseteq Tf$.

Let $\mathcal{R}$ be the ring of germs, along $\{0\}\times [0,1]$, of functions depending on two variables, $g(x,s)$, and denote by $m_{1}$ the ideal $\{g\in \mathcal{R}/g(0,s)=0\}=\langle x\rangle$. By assumption, we have that $h\in m_1^{k+1}$. We need to show that, $\forall s\in [0,1],\ m_1^{k+1}\subseteq Tf_s$.

It is:
$$m_1^{k+1}\subset Tf\cdot \mathcal{R}=Tf_s+Th\subseteq Tf_s+m_1^{k+2}=Tf_s+m_1\dot m_1^{k+1},$$
thus, by Nakayama's lemma, $m_1^{k+1}\subseteq Tf_s$, and we get our desired solution.

To prove the inverse, let $f$ be $k$--determined, with respect to $RK_1$--equivalence, and $h\in m^{k+1}$. Consider the curve $f_s(x)=f(x)+sh(x)$. There exists a family of tangent to the identity diffeomorphisms $\psi_s(x)$, such that:
$$f=\frac{1}{\psi_s'(x)}f_s(\psi_s(x)).$$
Differentiating, with respect to $s$, we get:
$$0=-\frac{1}{(\psi_s'(x))^2}X'(\psi_s(x))f_s(\psi_s(x))+\frac{1}{\psi_s'(x)}f'(\psi_s(x))X(\psi_s(x))=$$
$$=-\frac{1}{\psi_s'(x)}h(\psi_s(x)),$$
where, as always, $\frac{\partial}{\partial s}\psi_s(x)=X(\psi_s(x))$.

Fos $s=0$, the last equation becomes $-h(x)=-X'(x)f(x)+f'(x)X(x)$, thus $m^{k+1}\subseteq Tf$, since $X\in m^2$.   
\end{proof}
We are now ready to classify members of $\mathcal{E}$. We denote by ``$\sim$" the equivalence relation of $C^1$--conjugacy and by ``$\sim_1$" the equivalence relation of $C^1$--conjugacy, through diffeomorphisms tangent to the identity.
\begin{theorem}
Let $f\in \mathcal{E}$.
\begin{enumerate}
\item[(i)] {If $f(0)=a\neq 0$, then $f\sim 1$ and $f\sim_1a$.}
\item[(ii)] {If $f(0)=0,\ f'(0)=a\neq 0$, then $f\sim ax$ and $f\sim_1 ax$.}
\item[(iii)] {If $f(0)=..=f^{(k-1)}(0)=0,\ f^{(k)}(0)=a\neq 0,\ k\geq 2$, then $f\sim x^k$ and $f\sim_1 ax^k$.}
\end{enumerate}
\end{theorem}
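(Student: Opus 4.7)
The plan is to derive all three cases as direct applications of the finite determinacy criterion (Theorem 2.5) to carefully chosen normal forms, with an auxiliary linear rescaling used to pass from the tangent-to-identity statement to the general $C^1$ statement. In each case, the task reduces to computing the tangent space $Tf_{0}$ of the candidate model $f_{0}$ and verifying an ideal inclusion of the form $m^{k+1}\subseteq Tf_{0}$.

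For case (i), I would take $f_{0}=a$, viewed as a nonzero constant germ. Since $a$ is a unit in $\mathcal{E}$, one computes $Tf_{0}=a\cdot m+0\cdot m^{2}=m$, so $m\subseteq Tf_{0}$ and $f_{0}$ is $0$--determined. Any $f$ with $f(0)=a$ thus shares its $0$--jet with $f_{0}$ and satisfies $f\sim_{1}a$. To upgrade $\sim_{1}$ to $\sim$, apply the linear diffeomorphism $\phi(x)=ax$ (not tangent to the identity): the conjugacy formula gives $\tfrac{1}{a}\cdot a=1$, hence $a\sim 1$ and $f\sim 1$.

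For case (ii), take $f_{0}=ax$, so that $Tf_{0}=ax\cdot m+a\cdot m^{2}$. Since $a\neq 0$ and $ax\cdot m\subseteq m^{2}$, one obtains $Tf_{0}=m^{2}$, and thus $ax$ is $1$--determined; every $f$ with $f(0)=0$ and $f'(0)=a$ therefore satisfies $f\sim_{1}ax$. Here the slope $a$ is a genuine invariant of $C^{1}$ conjugacy (a linear change $\phi(x)=cx$ leaves the coefficient of $x$ unchanged), so $\sim$ and $\sim_{1}$ produce the same model.

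For case (iii), take $f_{0}=ax^{k}$, giving $Tf_{0}=ax^{k}\cdot m+kax^{k-1}\cdot m^{2}$. I would verify $Tf_{0}=m^{k+1}$ by noting that $m^{2}=\langle x^{2}\rangle$, so $x^{k-1}\cdot m^{2}=\langle x^{k+1}\rangle=m^{k+1}$ (using $ak\neq 0$), while both summands plainly lie inside $m^{k+1}$. Hence $ax^{k}$ is $k$--determined and $f\sim_{1}ax^{k}$. To pass to $f\sim x^{k}$, apply $\phi(x)=cx$: the rescaled field is $ac^{k-1}x^{k}$, so choosing $c$ with $ac^{k-1}=1$ absorbs the leading coefficient, with the sign-flipping diffeomorphism $x\mapsto -x$ used as in the earlier remark (on $-x^{2}\sim x^{2}$) to handle any residual sign. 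The main obstacle I anticipate is not analytical but bookkeeping: making the tangent-space ideal computation in case (iii) fully rigorous in both directions, and handling the $\sim$ versus $\sim_{1}$ distinction uniformly so that the rescaling arguments mesh cleanly with the determinacy result.
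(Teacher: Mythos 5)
Your overall strategy coincides with the paper's: compute the tangent space of the model germ, invoke the determinacy criterion to obtain the tangent-to-identity normal form, then use a linear rescaling to pass to the general $C^1$ statement. The only structural difference is in case (i), where the paper does not use determinacy at all but writes the conjugacy down explicitly ($\psi(x)=\int_0^x\frac{dt}{f(t)}$ for $f\sim 1$ and $\psi(x)=\int_0^x\frac{a}{f(t)}dt$ for $f\sim_1 a$); your route via $T(a)=a\cdot m=m$ and $0$--determinacy is equally valid and arguably more uniform. Your computations $T(ax)=m^2$ and $T(ax^k)=m^{k+1}$ (using $m^j=\langle x^j\rangle$, which holds in $\mathcal{E}$ by Hadamard's lemma) agree with the paper's.

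Two points need attention. Most substantively, your sign-handling in (iii) fails for odd $k$: under $\phi(x)=cx$ the field $ax^k$ becomes $ac^{k-1}x^k$, and for odd $k$ the exponent $k-1$ is even, so $c^{k-1}>0$ and the sign of $a$ cannot be altered; moreover $\phi(x)=-x$ sends $ax^k$ to $(-1)^{k+1}ax^k$, which changes the sign only when $k$ is even --- precisely the case where the rescaling already sufficed. No argument can close this gap, since $-x^3$ (attracting) and $x^3$ (repelling) are not conjugate even topologically; for odd $k$ and $a<0$ the correct general normal form is $-x^k$. This defect is inherited from the statement itself (the paper's own formula $\psi(x)=(a/b)^{1/(1-k)}x$ is real only when $a/b>0$ for odd $k$), but your proposed fix does not repair it and you should flag the restriction rather than claim the residual sign is absorbed. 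Secondly, in (ii) you justify the invariance of $a$ only against \emph{linear} changes of coordinates; to conclude that $ax\sim bx$ forces $a=b$ you must exclude all $C^1$ conjugacies, e.g.\ by noting that for any $C^1$ diffeomorphism $\phi$ fixing the origin one has
\[
g'(0)=\lim_{x\to 0}\frac{a\,\phi(x)}{x\,\phi'(x)}=a,
\]
so the linear part at a singular point is a $C^1$--conjugacy invariant (the paper instead solves the ODE $ax\psi'=b\psi$ and observes that $\psi(x)=cx^{a/b}$ is a $C^1$ diffeomorphism only when $a=b$).
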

\begin{proof}
\begin{enumerate}\item[]
\item[(i)] {Let $f(0)=a\neq 0$. The solution of equation $f=\frac{1}{\psi'}$ is, of course, $\psi(x)=\int_0^x\frac{1}{f(x)}dx$ which is of class $C^1$, in a neighborhood of zero and furthermore $\psi'(0)\neq 0$, thus $\psi$ is a local $C^1$ diffeomorphism. We have just proved that $f\sim 1$.

To show that $f\sim_1a$, we have to find a local diffeomorphism $\psi(x)=x+\psi_1(x)$, with $\psi_1(0)=\psi_1'(0)=0$, such that:
$$f(x)=\frac{a}{1+\psi_1'(y)}.$$
The solution of last equation is:
$$\psi_1(x)=\int_0^x\bigg(\frac{a}{f(t)}-1\bigg)dt,$$
ensuring that the desired diffeomorphism is $\psi(x)=\int_0^x\frac{a}{f(t)}dt$, which indeed fulfils conditions $\psi(0)=0,\ \psi'(0)=1$.}
\item[(ii)] {In this case, $Tf=m^2$, thus $f$ is $1$--determined. 

In case $ax\sim bx$, a local diffeomorphism exists, such that $ax=\frac{b\psi(x)}{\psi'(x)}$. For the solution $\psi(x)=cx^{\frac{a}{b}}$ to be a diffeomorphism, relation $a=b$ should hold.

If $ax\sim_1 bx$, a diffeomorphism $\psi(x)=x+\psi_1(y)$, with $\psi_1(0)=\psi_1'(0)=0$, exists such that:
$$ax=\frac{bx+b\psi_1(x)}{1+\psi_1'(x)}.$$
The solution of last equation, for $a\neq b$, is $\psi_1(x)=-y$, which violates the condition $\psi_1'(0)=0$. Therefore, $ax\sim_1 bx$ if, and only if, $a=b$.}
\item[(iii)] {In this case, $Tf=m^{k+1}$, ensuring that $f$ is $k$--determined. 

For $ax^k\sim bx^k$ to hold,  a diffeomorphism $\psi(x)$ exists, such that:
$$\frac{1}{\psi'(x)}a\psi^k(x)=bx^k.$$
The unique solution, satisfying $\psi(0)=0$, is $\psi(x)=(\frac{a}{b})^{1/1-k}x$.

To also have $\psi'(0)=1$ relation $a=b$ should hold.}
\end{enumerate}
\end{proof}
We can now provide, in Table 2, models for one--dimensional vector fields, under the $C^1$--conjugacy relation, when both the conjugating diffeomorphism is tangent to the identity, or not. The constant $a$, appearing in the table, is understood to be non--zero.  
\vspace*{0.3cm}
\begin{center}
Table 2:\\
Local models under $C^1$ conjugacy
\end{center}    
{\small \begin{center}
   \begin{tabular}{|c|c|c|}
   \hline
Singularity & General Case & Tangent to identity case \\
\hline 
\hline
regular point & $1$ & $a$ \\ 
hyperbolic singularity & $ax$ & $ax$\\
degenerate singularity & $x^k$ & $ax^k$ \\
\hline 
       \end{tabular}
        \end{center}}
\vspace*{0.2cm}
\subsection{The $C^k,\ k\geq 2$ case}
In the $C^{\infty}$ case, the classification of $1$--dimensional vector fields is well--known. We state the main result here and refer to \cite{Belitskii} for the proof.
\begin{theorem}\label{Theo-capeiro-normal-forms}
Let $f(x)\frac{\partial}{\partial x}$ the germ at the origin of a, non--flat, smooth vector field of the line. This field is, locally, smoothly conjugate either with the vector field $a x\frac{\partial}{\partial x}$, in case $f'(0)=a \neq 0$, or with the vector field $\big(\pm x^k+dx^{2k-1}\big)\frac{\partial}{\partial x},\ d\in \mathbb{R},k\geq 2$.  
\end{theorem}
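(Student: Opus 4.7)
The plan is to split into the two stated cases and use different techniques for each.

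In the hyperbolic case $f'(0)=a\neq 0$, the strategy is simply to invoke Sternberg's linearization theorem. Resonances in dimension one take the form $a=na$ with integer $n\geq 2$, which is impossible since $a\neq 0$. Sternberg therefore yields directly a smooth conjugation of $f$ to its linear part $ax$.

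In the degenerate case, the non-flatness hypothesis guarantees a smallest $k\geq 2$ with $f^{(k)}(0)\neq 0$. I would proceed in two stages, formal and then smooth. First, a linear rescaling $x\mapsto cx$ for an appropriate constant $c$ normalises the leading coefficient and brings $f$ to the form $\pm x^k+O(x^{k+1})$. Next I would run the standard Takens-style normal form procedure. The infinitesimal conjugacy operator is $L_X(f)=Xf'-X'f$, and a direct computation shows $L_{x^m}(\pm x^k)=\pm(k-m)\,x^{m+k-1}$. Consequently, every monomial of degree $m+k-1$ with $m\geq 1$ lies in the image of $L$ modulo higher-order terms, except when $m=k$ --- that is, the monomial $x^{2k-1}$ is the unique resonance, and its coefficient becomes the invariant $d$. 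Killing non-resonant terms degree by degree and then applying Borel's theorem yields a smooth change of coordinates that reduces $f$ to the polynomial normal form $\pm x^k+dx^{2k-1}$ plus a flat remainder $r(x)$.

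The main obstacle, and the essential gap between the formal and $C^{\infty}$ theories, is disposing of the flat tail $r$. My plan here is the path (or homotopy) method. Connect $f$ to its polynomial normal form through the family $f_s=\pm x^k+d x^{2k-1}+s\,r$ for $s\in[0,1]$, and look for a family $\psi_s$ of diffeomorphisms with $\psi_0=\mathrm{id}$ realising the conjugation at each $s$. Differentiating the conjugation identity in $s$ reduces the problem to solving, for each $s$, the homological equation $X f_s'-X' f_s=-r$ with $X$ smooth and $X(0)=0$. Since $f_s$ vanishes to exact order $k$ while $r$ is flat, one can write down a smooth solution by explicit quadrature along the lines of Lemma~2.3 above, with the singularity of $1/f_s$ at the origin absorbed by the flatness of $r$. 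Integrating the non-autonomous flow generated by $X$ over $s\in[0,1]$ then produces the required smooth conjugation. This last step is the technical heart of Belitskii's argument and the place where dimension one pays off: flatness is strong enough to override the blow-up of the homological division.
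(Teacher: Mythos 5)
The paper does not actually prove this statement: it is quoted as known, with the proof deferred to the cited reference (Belitskii), so there is no in‑paper argument to compare yours against. Your outline is, however, the standard proof from that literature and its skeleton is sound: Sternberg's linearization handles the hyperbolic case (and you correctly note that one‑dimensional vector fields have no resonances), while the degenerate case goes by leading‑coefficient normalization, the homological computation $L_{x^m}(\pm x^k)=\pm(k-m)x^{m+k-1}$ identifying $x^{2k-1}$ as the unique resonant monomial, Borel realization of the formal transformation, and the path method to absorb the flat remainder. Three points deserve care before this counts as a complete proof. First, differentiating the conjugacy identity $\frac{1}{\psi_s'(x)}f_s(\psi_s(x))=f_0(x)$ in $s$ does not yield the naive equation $Xf_s'-X'f_s=-r$ but a version carrying a factor $1/\psi_s'$; one must absorb this by a change of unknown, exactly the $\tilde X$ substitution the paper carries out in the proof of its $C^1$ determinacy theorem (Theorem 2.2.2). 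Second, the quadrature $X=f_s\int_0^x r(t)f_s(t)^{-2}\,dt$ requires checking that $f_s$ vanishes to exact order $k$ uniformly in $s\in[0,1]$ (true since $r$ is flat), so that $r/f_s^2$ is flat and $X$ is smooth and flat, and that the resulting non‑autonomous flow exists up to $s=1$ on a fixed neighbourhood of $0$ (true since each $X_s$ is flat at the origin). Third, your pointer to ``Lemma 2.3'' has no referent; the relevant explicit‑quadrature model in the paper is Lemma 2.2.1. None of these is a fatal gap, but they are where the actual work of Belitskii's argument lies.
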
  
The sign of the term $x^k$, in the coefficient $\pm x^k+dx^{2k-1}$ depends on $k$: if $k$ is odd, the sign is equal to "+", otherwise the sign is equal to the sign of $a$. If one chooses the conjugating diffeomorphism to be tangent to the identity, the coefficient of the term $x^k$ cannot be removed: in this case, the normal form is $ay^k+dy^{2k-1},a\neq 0$.

In the $C^k,\ k\geq 2$ but finite, the vector field can be brought to the same normal form, with the possible addition of flat terms. We refer the reader to \cite{Belitskii2} for details.

\begin{example}
Consider the 1--d vector field $(x^2+x^3)\frac{\partial}{\partial x}$ and the local function
\[
\phi(x)=
\begin{cases}
\frac{x}{1+x\log x-x\log(x+1)},\ & x\neq 0\\
0,\ & x=0
\end{cases}.
\]
As easily confirmed, $\phi$ is a, tangent to the identity, local diffeomorphism of class $C^1$, not even $C^2$. Relation $x^2+x^3=\frac{\phi^2(x)}{\phi'(x)}$ holds, ensuring that our vector field is $C^1$, but not $C^2$, conjugate to the vector field $x^2\frac{\partial}{\partial x}$, in accordance with the results presented above.    
\end{example}
We tabulate below the local models in the case of $C^{\infty}$ conjugacy.
\vspace*{0.3cm}
\begin{center}
Table 3:\\
Local models under $C^{\infty}$ conjugacy
\end{center}    
{\small \begin{center}
   \begin{tabular}{|c|c|c|}
   \hline
Singularity & General Case & Tangent to identity case \\
\hline 
\hline
regular point & $1$ & $a$ \\ 
hyperbolic singularity & $ax$ & $ax$\\
degenerate singularity & $\pm x^k+dx^{2k-1}$ & $a x^k+dx^{2k-1}$ \\
\hline 
       \end{tabular}
        \end{center}}
\vspace*{0.2cm}
Having completed the classification of germs of $1$--d vector fields, we now focus on their bifurcations.
\section{Unfoldings for $1$--d vector fields}
The concept of an unfolding relies on the computation of tangent spaces of equivalence classes; thus we consider only the $C^1$ and the $C^{\infty}$ case. 

We also restrict ourselves to the case of degenerate singular points since, in case the origin is a regular point or a non--degenerate singular point for our vector field, it is not affected by perturbations. 
\subsection{The $C^1$--case}
In the $C^1$ case, the main result concerning transversal unfoldings is the following:
\begin{theorem}
\begin{enumerate}\item[]
\item {In the general case, a transversal unfolding of the vector field $x^k\frac{\partial}{\partial x},\ k\geq 2$ is given by the family $Q:\mathbb{R}\times \mathbb{R}^{k-1}\rightarrow \mathbb{R}$, defined as follows:
\[
Q(x,\lambda_1,..,\lambda_{k-1}):=\big(x^k+\sum _{i=1}^{k-1}\lambda_{i}x^i\big)\frac{\partial}{\partial x}.
\]
}
\item {In case the changes of coordinates are tangent to the identity, a transversal unfolding of the vector field $ax^k\frac{\partial}{\partial x},\ k\geq 2$ is given by the family $Q_1:\mathbb{R}\times \mathbb{R}^k\rightarrow \mathbb{R}$, defined as follows::
\[
Q_1(x,\lambda_1,..,\lambda_{k}):=\big((a+\lambda_k)x^k+\sum _{i=1}^{k-1}\lambda_{i}x^i\big)\frac{\partial}{\partial x}.
\]
}
\end{enumerate}
\end{theorem}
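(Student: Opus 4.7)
The plan is to follow the standard singularity-theoretic scheme for versal unfoldings: given the germ $f=x^k$, we compute its tangent space $Tf$ under the relevant $C^1$ action, identify a complement of $Tf$ in the module $m$ of admissible perturbations, and then verify that the partial derivatives $\partial Q/\partial\lambda_i|_{\lambda=0}$ (respectively $\partial Q_1/\partial\lambda_i|_{\lambda=0}$) realize a basis of that complement. The passage from this infinitesimal span condition to a genuine versality statement will then be delivered by the same parametric Nakayama argument that was used in the proof of the $k$--determinacy theorem above.

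The first step is to compute $Tf$ for $f=x^k$. In the tangent-to-identity case, the Lemma gives $T_1 f = x^k\cdot m + kx^{k-1}\cdot m^2 = m^{k+1}$, so that $m/T_1 f$ is generated by the classes of $\{x,x^2,\dots,x^k\}$ and has $\mathbb{R}$-dimension $k$. In the general case, the constraint on the infinitesimal generator $X$ weakens from $X\in m^2$ to $X\in m$; an essentially verbatim repetition of the integration trick in the proof of the Lemma (with the auxiliary function taken in $m$ rather than $m^2$) yields the analog $Tf = f\cdot m + f'\cdot m$, which for $f=x^k$ collapses to $m^{k+1}+m^k=m^k$. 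Thus $m/Tf$ has basis $\{x,x^2,\dots,x^{k-1}\}$ and dimension $k-1$.

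Next we match these complements to the proposed families. For $Q$, the initial velocities $\partial Q/\partial\lambda_i|_{\lambda=0}=x^i$, $i=1,\dots,k-1$, have classes in $m/m^k=m/Tf$ that form precisely the required basis; for $Q_1$, the initial velocities $x,x^2,\dots,x^{k-1}$ together with $\partial Q_1/\partial\lambda_k|_{\lambda=0}=x^k$ form a basis of $m/m^{k+1}=m/T_1 f$. In both cases the span condition $\sum_i\mathbb{R}\cdot\partial F/\partial\lambda_i|_{\lambda=0}+Tf=m$ is satisfied, which is exactly the required infinitesimal transversality. To upgrade this to versality, given any competing unfolding $G(x,\mu)$ of $f$, we interpolate smoothly between $Q$ (or $Q_1$) and $G$ and look for a family of conjugating $C^1$-diffeomorphisms together with a change of parameters; this produces a homological equation of the shape $-\tilde X'F_s+F_s'\tilde X=-h$ in the ring $\mathcal{R}$ of germs along $\{0\}\times[0,1]$, solvable there by the same explicit integral formula and Nakayama's lemma applied to the ideal $m_1=\langle x\rangle$, exactly as in the determinacy proof.

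The main obstacle we anticipate is not the algebraic span computation (which is short and explicit) but the $C^1$-parametric regularity of the conjugating family: one must secure that the solution $\tilde X$ produced by the integral formula is jointly $C^1$ in $(x,\lambda)$, that the vanishing conditions $X(0)=0$ and, in the tangent-to-identity subcase, $X'(0)=0$ hold uniformly in the parameter, and that integrating the resulting family of vector fields returns a family of $C^1$-diffeomorphisms whose domain contains a common neighborhood of the origin. Once these regularity issues are handled in the same way as in the one-parameter determinacy proof, both items of the theorem follow.
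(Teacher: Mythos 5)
Your proposal is correct and, for the tangent-to-identity case, coincides with the paper's argument: compute $T(x^k)=m^{k+1}$ from the Lemma, observe that $m/T(x^k)$ is spanned by the classes of $x,\dots,x^k$, and check that the initial velocities $\partial Q_1/\partial\lambda_i$ realize this basis. Where you genuinely diverge is the general case. The paper does not recompute the tangent space there; it simply invokes the classification result (Theorem 2.2.3(iii), $ax^k\sim x^k$) to conclude that the coefficient of $x^k$ is absorbed by the group action, so $\lambda_k$ is redundant. You instead enlarge the Lie algebra from $X\in m^2$ to $X\in m$ and recompute the tangent space directly. That route is more systematic and stays entirely within the infinitesimal framework, at the cost of redoing the Lemma; the paper's shortcut is quicker but leans on the orbit-level classification. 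One small slip in your version: following the pattern of the Lemma, dropping the condition $X'(0)=0$ should give $Tf=f\cdot\mathcal{E}+f'\cdot m$, not $f\cdot m+f'\cdot m$ (the term $-X'f$ contributes $f\cdot\mathcal{E}$ once $X'(0)$ is unconstrained); for $f=x^k$ both expressions equal $m^k$, so your conclusion $m/Tf=\langle x,\dots,x^{k-1}\rangle$ stands, but the general formula as written is off. Finally, note that the theorem only claims a \emph{transversal} unfolding, and the paper's proof accordingly stops at the span condition; your additional sketch of the upgrade to versality via the parametric Nakayama argument, and your flagging of the joint $C^1$ regularity of the conjugating family as the real difficulty, go beyond what the paper establishes but are consistent with the machinery of its determinacy proof.
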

\begin{proof}
In the tangent to the identity case, it is $T(x^k)=m^{k+1}$, thus $m/T(x^k)=\langle x,..,x^k\rangle$. All derivatives $\frac{\partial Q_1}{\partial \lambda _i}(\lambda_1,...,\lambda_k)$ belong to $m/T(x^k)$, confirming our statement.

In the general case, multiples of $x^k$ are conjugate to $x^k$. Parameter $\lambda_k$ is, therefore, not needed.
\end{proof}
\subsection{The $C^{\infty}$ case}
In the $C^{\infty}$ case, the unfoldings of vector fields of the line are given in the following:
\begin{theorem}\label{theo-Kostov}
\begin{enumerate}\item[]
\item {In the general case, a versal unfolding of the vector field $\big(\pm x^k+d x^{2k-1}\big)\frac{\partial}{\partial x}$ is given by $F:\mathbb{R}\times \mathbb{R}^k\rightarrow \mathbb{R}$, defined as:
\[
F(x,\lambda_1,..,\lambda_k)=\big(\pm x^k+\sum_{i=1}^{k-1}\lambda_ix^{k-1-i}+dx^{2k-1}\big)\frac{\partial}{\partial x}.
\]}
\item {In the tangent to the identity case, a versal unfolding of the vector field $\big(ax^k+d x^{2k-1}\big)\frac{\partial}{\partial x}$ is given by $F_1:\mathbb{R}\times \mathbb{R}^k\rightarrow \mathbb{R}$, defined as:
\[
F_1(x,\lambda_1,..,\lambda_k)=\big(ax^k+\sum_{i=1}^{k-1}\lambda_ix^{k-1-i}+dx^{2k-1}\big)\frac{\partial}{\partial x}.
\]}
\end{enumerate}
\end{theorem}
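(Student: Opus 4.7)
The plan is to invoke the standard versality criterion from singularity theory: a smooth unfolding $F$ of a germ $f$ is versal for $C^\infty$--conjugacy if and only if the infinitesimal generators $\partial F/\partial\lambda_i|_{\lambda=0}$, together with the tangent space $Tf$ to the $C^\infty$--conjugacy orbit of $f$, span the ambient ideal $m$. Thus the proof naturally splits into (a) computing $Tf$ in the $C^\infty$ category, and (b) verifying a spanning condition for the partials $\partial F/\partial\lambda_i$ (resp.\ $\partial F_1/\partial\lambda_i$) at the origin of the parameter space.

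For step (a), I would repeat the derivation preceding Lemma~2.2, but now allowing $\psi_s$ to be an arbitrary $C^\infty$ diffeomorphism fixing the origin (so that the infinitesimal generator $X$ satisfies only $X\in m$ in the general case, and still $X\in m^2$ in the tangent-to-identity case). The homological equation is again $-X'f+Xf'=h$, and the analogue of Lemma~2.2 yields $Tf=f\cdot m+f'\cdot m$ in the general case and $Tf=f\cdot m+f'\cdot m^2$ in the tangent-to-identity case. Substituting $f=\pm x^k+dx^{2k-1}$ and using that $\pm 1+dx^{k-1}$ is a unit of $\mathcal{E}$, one reads off that $Tf$ contains $x^{k+1}\mathcal{E}$, and that a basis of the quotient $m/Tf$ can be taken to consist of $\{x,x^2,\dots,x^{k-1}\}$ together with the resonant monomial $x^{2k-1}$; the latter is precisely the formal modulus captured by the coefficient $d$ in Theorem~\ref{Theo-capeiro-normal-forms}.

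For step (b), the partials $\partial F/\partial\lambda_i|_{\lambda=0}=x^{k-1-i}$, $i=1,\dots,k-1$, realize the monomials $x^0,x^1,\dots,x^{k-2}$, so, together with the direction along $d$, they span the complement described above and $F$ is versal. The tangent-to-identity case is entirely analogous: one has $Tf=f\cdot m+f'\cdot m^2$, the class of $x^k$ is no longer in $Tf$ (since the leading coefficient $a$ becomes a modulus, as already observed at the end of Subsection 2.2), and the extra parameter $\lambda_k$ in $F_1$ is designed precisely to cover this direction, while $\lambda_1,\dots,\lambda_{k-1}$ again supply $x^0,\dots,x^{k-2}$. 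One then promotes infinitesimal transversality to genuine versality by the usual unfolding theorem (e.g.\ in the form given in \cite{Brocker}).

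The main obstacle is the explicit computation in step (a) for the perturbed germ $\pm x^k+dx^{2k-1}$ rather than for the pure monomial $x^k$: the tail $dx^{2k-1}$ perturbs both $f$ and $f'$ by higher-order terms, and one must verify carefully that monomials of every degree $\geq k$ other than $2k-1$ can still be eliminated by a suitable choice of $X$, while $x^{2k-1}$ remains obstructed. This is essentially the Belitskii computation underlying Theorem~\ref{Theo-capeiro-normal-forms}; if one prefers to avoid redoing it, an alternative route is to quote that theorem directly and realise every normal form within the image of $F$ by solving for $\lambda$ through a triangular (hence invertible) polynomial system, reducing versality to a transparent Jacobian non-vanishing check at $\lambda=0$.
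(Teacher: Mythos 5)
The paper itself offers no proof of this theorem: it is quoted from \cite{Kostov}, with \cite{Klimes-Rousseau} cited for a complete proof. Your sketch must therefore stand on its own, and it has two genuine gaps. The first is in step (a): in the $C^\infty$ category the tangent space is \emph{not} $f\cdot m+f'\cdot m$ (nor $f\cdot m+f'\cdot m^2$ in the tangent-to-identity case). The analogue of the paper's lemma on the $C^1$ tangent space fails for smooth conjugacies, because the explicit solution $X=k-f\int_0^x\frac{g(t)+k'(t)}{f(t)}\,dt$ of the homological equation is in general divergent or only of class $C^1$ when $f$ vanishes to order $k\geq 2$ — this is exactly the point of the paper's example with $x^2+x^3$, and the reason Tables 2 and 3 differ. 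Your formula is in fact inconsistent with your own (correct) conclusion: for $f=x^k+dx^{2k-1}$ one has $x^{2k-1}\in f'\cdot m^2\subseteq f\cdot m+f'\cdot m$, so if $Tf$ were given by that formula the class of $x^{2k-1}$ would vanish in $m/Tf$ and $d$ would not be a modulus. The correct tangent space is the image of the operator $X\mapsto Xf'-X'f$ on smooth $X$ (where $X$ and $X'$ cannot be prescribed independently), computed degree by degree: $x^j\mapsto (k-j)x^{k+j-1}+\cdots$, so precisely the resonant direction $x^{2k-1}$ (the case $j=k$) is missed, along with $x,\dots,x^{k-1}$.

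The second and more serious gap is the final step, ``promote infinitesimal transversality to genuine versality by the usual unfolding theorem.'' The unfolding theorem of \cite{Brocker} concerns right (or contact) equivalence of function germs, where the Thom--Mather and Malgrange preparation machinery applies; the action of diffeomorphisms on vector fields by conjugacy is not covered by it, and the passage from infinitesimal versality to versality here is exactly the nontrivial content of Kostov's theorem — which is why \cite{Klimes-Rousseau} is an entire paper devoted to proving it. The homotopy-method argument used in the paper's $C^1$ finite-determinacy theorem does not transfer either, again because the homological equation cannot be solved within $C^\infty$. Your fallback route fails for a related reason: realizing each individual nearby normal form inside the image of $F$ is far weaker than versality, which requires inducing arbitrary parametrized families through conjugacies depending suitably on the parameters; moreover nearby fields generically have several hyperbolic zeros carrying their own continuous invariants, so a pointwise normal-form argument cannot close the proof. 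As written, the proposal establishes at most infinitesimal transversality of $F$ and $F_1$, not versality.
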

  For a proof of the theorem above, we refer the reader to \cite{Kostov,Klimes-Rousseau}. 
 \section{Conclusions}
We have studied local models and unfoldings for smooth vector fields of the line. We reviewed the results for the cases of topological and smooth conjugacy and we completely solved the problem for the $C^1$ case. Special attention was given to the case where the changes of coordinates are tangent to the identity. We hope that the reader will find useful this, fairly complete, description of the classification problem in dimension $1$.

The classification of vector fields is, of course, a subject that continues to attract the interest of many researchers (see \cite{Belitskii,Murdock}), while the differentiability properties of the conjugating transformation is of special importance in problems of both dynamics and geometry \cite{Helene1}. We hope that in the future we shall be able to further comment both on the extension of the results stated above in the multidimensional case and on the corresponding results in the case of diffeomorphisms.


\begin{thebibliography}{10}
\labelsep=1em\relax

\bibitem{Sternberg}
{\sc S Sternberg}, 
{\em Local $C^n$ transformations of the real line}, 
Duke Math. J., {\bf 24}, 97-102, 1957.

\bibitem{Takens}
{\sc F Takens}
{\em Normal forms for certain singularities of vectorfields}, Ann.de l’institut Fourier, {\bf 23(2)}, 163-195, 1973.

\bibitem{Brocker}
{\sc T Br\"{o}cker},
{\em Differential Germs and Catastrophes}, Cambridge Un.Press, 1975.

\bibitem{Dumortier}
{\sc F Dumortier},
{\em Singularities of vector fields on the plane}, 
J.Diff.Equations, {\bf 23(1)}, 53-106, 1977.

\bibitem{Kostov}
{\sc V P Kostov},
{\em Versal deformations of differential forms of real degree on the real line}, Math.USSR Izvestiya, {\bf 37(3)}, 525-537, 1991.
  
\bibitem{Belitskii}
{\sc G Belitskii},
{\em $C^{\infty}$ normal forms of local vector fields}, Acta Apl.Mathematicae, {\bf 70}, 23-41 2002.

\bibitem{Belitskii2}
{\sc G Belitskii, V Tkachenko},
{\em One--Dimensioanl Functional Equations}, Birkh\"{a}user, 2003.

\bibitem{Ren-Yang}
{\sc Z Ren, J Yang},
{\em On the $C^1$ normal forms for hyperbolic vector fields}, Comptes Rendus Mathematique, {\bf 336}. 709-712, 2003.

\bibitem{Sanders-Verhulst-Murdock}
{\sc J Sanders, F Verhulst and J Murdock}, 
{\em Averaging Methods in Nonlinear Dynamical Systems}, Springer, 2007.

\bibitem{Murdock}
{\sc J Murdock}, 
{\em Normal Forms and Unfoldings for Local Dynamical Systems}, Springer, 2010.

\bibitem{Klimes-Rousseau}
{\sc M Klime\v{s}, C Rousseau},
{\em On the universal unfoldings of vector fields of the line: A proof of Kostov's theorem}, Qual.Th.of Dyn.Sys., {\bf 19(80)}, (2020), 1--13.

\bibitem{Helene1}
{\sc H Eynard-Bontemps, A Navas}, 
{\em On the failure of of linearization of $C^1$ hyperbolic vector fields in dimension 1}, arXiv:2212.13646, 2023.

\end{thebibliography}
\end{document}